\documentclass[11pt]{amsart}
\usepackage{amssymb,amscd,amsthm,latexsym}
\allowdisplaybreaks[3]
\usepackage{graphicx}
\usepackage[british]{babel}
\usepackage[all]{xy}
\usepackage{color}
\usepackage{mathabx}
\usepackage{calrsfs}
\usepackage{booktabs}

\theoremstyle{plain}
\newtheorem{theorem}[subsection]{Theorem}

\newtheorem{proposition}[subsection]{Proposition}
\newtheorem{corollary}[subsection]{Corollary}

\theoremstyle{definition}
\newtheorem{definition}[subsection]{Definition}

\theoremstyle{remark}
\newtheorem{example}[subsection]{Example}

\newtheorem{remark}[subsection]{Remark}

\newdir{ >}{
 @{}*!/-10pt/@{>} }
\newdir{ |>}{
 @{}*!/-5.5pt/@{|}*!/-10pt/:(1,-.2)@^{>}*!/-10pt/:(1,+.2)@_{>} }
\newdir{ >>}{{}*!/8pt/@{|}*!/3.5pt/:(1,-.2)@^{>}*!/3.5pt/:(1,+.2)@_{>}}

\newcommand{\defn}[1]{\emph{#1}}

\newcommand{\To}{\Rightarrow}

\def\mathrmdef#1{\expandafter\def\csname#1\endcsname{{\rm#1}}}
\mathrmdef{max}
\mathrmdef{op}
\mathrmdef{id}

\newcommand{\C}{\ensuremath{\mathbb{C}}}

\newcommand{\V}{\ensuremath{\mathcal{V}}}
\newcommand{\Grp}{\ensuremath{\mathsf{Grp}}}
\newcommand{\Ord}{\ensuremath{\mathsf{Ord}}}
\newcommand{\Pos}{\ensuremath{\mathsf{Pos}}}
\newcommand{\Ab}{\ensuremath{\mathsf{Ab}}}
\newcommand{\Mon}{\ensuremath{\mathsf{Mon}}}

\newcommand{\N}{\ensuremath{\mathbb{N}}}

\newcommand{\Pt}{\ensuremath{\mathsf{Pt}}}

\newcommand{\twosum}[2]{\protect{\left(\begin{smallmatrix} {#1}\\ {#2}\\ \end{smallmatrix}\right)}}

\def\mathrmdef#1{\expandafter\def\csname#1\endcsname{{\rm#1}}}
\newcommand{\la}{\langle}
\newcommand{\ra}{\rangle}
\newcommand{\mono}{\rightarrowtail}

\newcommand{\moi}{\preccurlyeq}

\def\vpullback{
\ar@{-}[]+D+<6pt,-6pt>;[]+D+<0pt,-12pt>;%
\ar@{-}[]+D+<0pt,-12pt>;[]+D+<-6pt,-6pt>}

\def\pullback{
 \ar@{-}[]+R+<4pt,-3pt>;[]+RD+<4pt,-6pt>%
 \ar@{-}[]+D+<3pt,-6pt>;[]+RD+<4pt,-6pt>}

\usepackage{chngcntr}
\counterwithin*{equation}{section}

\def\mathrmdef#1{\expandafter\def\csname#1\endcsname{{\rm#1}}}
\mathrmdef{max}
\mathrmdef{op}
\mathrmdef{Nat}
\mathrmdef{sym}

\def\xx{\mathsf{x}}
\def\yy{\mathsf{y}}

\newcommand{\comma}[2]{ #1 \! \downarrow\! #2}
\newcommand{\J}{\ensuremath{\mathcal{J}}}
\newcommand{\T}{\ensuremath{\mathcal{T}}}
\newcommand{\Alg}[1]{\ensuremath{\mathsf{Alg}(#1)}}

\begin{document}

\title{A note on varieties of ordered algebras}

\author{Maria Manuel Clementino}
\address{CMUC, Department of Mathematics, University of
Coimbra, 3000-143 Coimbra, Portugal}\thanks{}
\email{mmc@mat.uc.pt}

\author{Diana Rodelo}
\address{Department of Mathematics, University of the Algarve, 8005-139 Faro, Portugal and Center for Research and Development in Mathematics and Applications (CIDMA), Department of Mathematics, University of Aveiro, 3810-193 Aveiro, Portugal}
\thanks{The authors acknowledge partial financial support by the Fundação para a Ciência e a Tecnologia (Portuguese Foundation for Science and Technology) under the scope of the projects UID/00324/2025 (https://doi.org/10.54499/UID/00324/2025) (Centre for Mathematics of the University of Coimbra).
The second author also acknowledges financial support by CIDMA (https://ror.org/05pm2mw36)
under the Portuguese Foundation for Science and Technology
(FCT, https://ror.org/00snfqn58), Grants UID/04106/2025 (https://doi.org/10.54499/UID/04106/2025)
and UID/PRR/04106/2025 (https://doi.org/10.54499/UID/PRR/04106/2025).
}
\email{drodelo@ualg.pt}

\keywords{regular categories, categories enriched in the category of preorders, Mal'tsev categories, varieties of ordered algebras, Mal'tsev operation}

\subjclass{18E08, 
18E13, 
18D20, 
08C05, 
18B25}

\begin{abstract}
The aim of this work is to study the notions of lax protomodular and $\Ord$-Mal'tsev category
at the level of (coherent) varieties of (pre)ordered algebras and to further compare them, as has been done in the non-ordered context. We characterise varieties of ordered algebras which are (co)lax protomodular and those which are $\Ord$-Mal'tsev, in terms of operations of arities given by ordered sets and inequalities involving them.
We exhibit examples of (co)lax protomodular non-degenerate $\Ord$-categories, which were unknown. We prove that, for varieties of ordered algebras which are $\Ord$-Mal'tsev categories, the order of their algebras is degenerate (i.e. is symmetric). As a consequence, the implication ``protomodular $\Rightarrow$ Mal'tsev'' cannot be carried out to our 
context. The case of non-coherent ordered varieties which are $\Ord$-Mal'tsev categories is also addressed, where we show the existence of algebras with non-degenerate order.
\end{abstract}

\maketitle

\section*{Introduction}
The notions of protomodular category~\cite{NEKEAC} and Mal'tsev category~\cite{CLP} (see also~\cite{CKP, CPP, BB}) have played a crucial role in the development of categorical algebra over the past decades. Both give good generalisations of the variety $\Grp$ of groups, thus many features of groups have been extended to these categorical contexts (see~\cite{BB}, for example). The notion of protomodular category is stronger than that of a Mal'tsev category, therefore offering a wider range of tools to work with. For instance, pointed protomodular categories satisfy the Split Short Five Lemma and give a good context to develop homological algebra.

In~\cite{NEKEAC} the notion of \emph{protomodular category} was introduced as a finitely complete\footnotemark\footnotetext{The original definition only asks for the existence of pullbacks of points along arbitrary morphisms.} category $\C$ such that the pullback change-of-base functors $g^*\colon \Pt_B(\C) \to \Pt_C(\C)$ are conservative for any morphism $g\colon C\to B$. If $\C$ is pointed, then $\C$ is protomodular if and only if the Split Short Five Lemma holds. The leading example of a protomodular category is $\Grp$. More precisely, in \cite{BJ} the varieties of universal algebras which form a pointed protomodular category were characterised as those whose theory admits a unique constant $0$, and, for some $n\in\mathbb{N}$, $n$ binary operations $\alpha_1,\dots,\alpha_n$ and an ($n+1$)-ary operation $\theta$ such that
\begin{equation}\label{proto ops}
	\left\{\begin{array}{l} \alpha_i(x,x)=0, \text{ for all }i\in\{1,\dots,n\} \\ \theta(\alpha_1(x,y),\dots,\alpha_n(x,y),y)=x.
	\end{array}\right.
\end{equation}
For $\Grp$, $n=1$, $\alpha(x,y)=xy^{-1}$ and $\theta(x,y)=xy$.

In~\cite{CMR} we presented a lax version of the algebraic property of protomodularity in the context of $\Ord$-enriched categories, where $\Ord$ is the category of (pre)ordered sets and monotone maps. In that work we considered the main characteristic properties of protomodularity, with respect to comma objects instead of pullbacks, and showed that those characteristic properties also hold in the enriched context. We called these \emph{(co)lax protomodular $\Ord$-enriched categories}. Since taking comma objects concerns an ordered pair of morphisms, this construction leads to two change-of-base functors; we call them \emph{vertical} and \emph{horizontal} change-of-base functors. The conservativeness of the vertical one defines lax protomodularity and the conservativeness of the horizontal one defines colax protomodularity (Section~\ref{Section: proto}).

One of the aims of this work is the characterisation of pointed (co)lax protomodular varieties of ordered algebras \eqref{inequalities for lax proto} of Theorem~\ref{lax proto ord-varieties}, similar to the one in \eqref{proto ops} for the non-enriched varietal context.
Due to this result we were able to produce examples of (co)lax protomodular non-degenerate $\Ord$-categories and give a positive answer to the question (raised in~\cite{CMR}) of their existence.

A variety of universal algebras is called a \emph{$2$-permutable variety}~\cite{Smith} (also called \emph{congruence permutable} or \emph{Mal'tsev} variety) when its theory admits a ternary Mal'tsev operation $p$ such that
\begin{equation}\label{Maltsev op}
	\left\{\begin{array}{l} p(x,y,y)=x \\ p(x,x,y)=y .
	\end{array}\right.
\end{equation}
It is well-known that $\Grp$ is $2$-permutable, with $p(x,y,z)=xy^{-1}z$. The existence of a Mal'tsev operation is equivalent to the fact that the composition of any pair of congruences $R,S$ on an algebra $A$ of $\V$ is $2$-permutable (i.e. congruence permutable): $RS\cong SR$~\cite{Maltsev-Sbornik}. Since this property makes sense in any regular category, it was possible to extend this notion to a categorical context in~\cite{CLP}: a regular category $\C$ is called a \emph{Mal'tsev category} when any pair of equivalence relations $R, S$  on any object $A$ in $\C$ is such that $RS\cong SR$. In~\cite{CR} we introduced the notion of an \emph{$\Ord$-Mal'tsev category} and studied several characterisations. We start Section~\ref{Section: Mal'tsev} by recalling the basic aspects of this work.

In Theorem~\ref{Ord-Maltsev ord-varieties} we obtain a characterisation of the varieties of ordered algebras which are $\Ord$-Mal'tsev categories. The theory admits a ternary operation satisfying conditions similar to those in \eqref{Maltsev op} for the non-enriched varietal context, where, as expected, inequalities play an essential role. Based on this characterisation, we prove that all algebras in a variety of ordered algebras which is an $\Ord$-Mal'tsev category have degenerate order, i.e. the order of the algebras is symmetric, thus an equivalence relation. This leads to an interesting remark: 
lax protomodular varieties of ordered algebras do not need to be $\Ord$-Mal'tsev.

In Section~\ref{Section: non-coherent} we consider the context of non-coherent varieties of ordered algebras. We find an example of an $\Ord$-Mal'tsev non-coherent ordered variety whose algebras have non-degenerate order.

\section{Varieties of ordered algebras}\label{Vars of ordered algs}

We shall consider (finitary) varieties of ordered algebras, where ``order'' here means preorder (i.e. a reflexive and transitive relation). Although most of the known literature on ordered algebras is carried out for the partial order case (see~\cite{Adamek-Milius}, \cite{NPower}, \cite{PPower}, \cite{Power}), we prefer to work in the more general setting of preorder.

In this section we present the setting to define varieties of ordered algebras. The approach is very similar to the usual one for (discrete) varieties of algebras (from~\cite{Lawvere}), where the main difference is in the system of arities: an arity is a finite ordered set $(X,\le)$ instead of a natural number $n$ (which we will denote simply by $X$).

First we consider the \emph{system $\J$ of arities}, which is the full subcategory of $\Ord$ of finite ordered sets.
 An \emph{$\Ord$-algebraic theory with arities $\J$} is an $\Ord$-category $\T$ (i.e. a category enriched in the category $\Ord$) together with an $\Ord$-functor $\tau\colon \J^\op \to \T$ which preserves powers and is surjective on objects. A \emph{$\T$-algebra} is a power preserving $\Ord$-functor $A\colon \T \to \Ord$, while a $\T$-algebra homomorphism is a natural transformation $f:A\To B$ between $\T$-algebras $A$ and $B$.

 Recall that in $\Ord$, and therefore also in $\J$, the \emph{power of ordered sets $C$ and $ X$} (also called the \emph{cotensor of $C$ by $X$}) is the ordered set \[X\pitchfork C=C^X=\{\xx\colon X\to C: \xx\text{ is monotone}\},\]
with its pointwise order. Moreover, in $\Ord$, and therefore also in $\J$, every (finite) ordered set $X$ is the \emph{copower} (or \emph{tensor}) of $1$ by $X$, $1\bullet X$.
Hence, if we denote $\tau(1)= T$, we
  have
  \[\tau(X)=\tau(X\pitchfork 1)=X\pitchfork T,\]
  and a $\T$-algebra $A\colon\T\to\Ord$ is completely determined by the image $A(T)$ of $T$, since, for any $X$ we have
  \[A(X\pitchfork T)=A(T)^X,\]
  with its pointwise order: given monotone maps $\xx,\yy\colon X \to A(T)$, we define $\xx\le \yy$ if and only if $\xx(x)\le \yy(x)$ for all $x\in X$. Moreover, a $\T$-algebra homomorphism is completely determined by its $T$-component. As a consequence, it is common to ``identify''  a $\T$-algebra $A$ with the ordered set  $(A(T), \le)$ and a $\T$-algebra homomorphism with $f_T\colon A(T)\to B(T)$, and simplify the notation to $(A,\le)$, or simply $A$, and $f\colon A\to B$.

  An \emph{operation with arity $(X,\le)$} is a morphism $\theta\colon X\pitchfork T  \to T$ in $\T$. The image $A(\theta)$ of an operation $\theta\colon X\pitchfork T\to T$ of arity $X$ is a monotone map $\theta_A\colon A^X\to A$ (where, when convenient, we omit the subscript). When $X$ has $n$ elements and discrete order, we simply say that $\theta$ is an $n$-ary operation.
  Since a $\T$-algebra homomorphism $f\colon A\to B$ is a natural transformation, for each operation $\theta\colon X\pitchfork T\to T$ the following diagram commutes
  \[\xymatrix{A^X\ar[r]^{f^X}\ar[d]_{\theta_A}&B^X\ar[d]^{\theta_B}\\
  A\ar[r]_f&B}\]
  and, consequently, for any $\xx\colon X\to A$ in $A^X$, $f(\theta_A(\xx))=\theta_B(f\xx).$

  We point out that an operation can be seen as a \emph{partial operation} in the following sense: given $\theta\colon X\pitchfork T\to T$ in $\T$, if, for instance, $X=\{x_1,x_2,x_3\}$ is endowed with the discrete order, then $A^X\cong A^3$ and so $\theta_A$ is defined for all triples $(a_1,a_2,a_3)$ of elements of $A$, while if $X=\{x_1,x_2,x_3\}$ with $x_2\leq x_3$ then $A^X\cong\{\xx\colon X\to A: \xx$ is monotone$\}\cong\{(a_1,a_2,a_3)\in A^3\,;\,a_2\leq a_3\}$, and so $\theta_A$ is defined for $(a_1,a_2,a_3)$ only if $a_2\leq a_3$. Moreover, the $\Ord$-enrichment of $\T$ induces an order between operations of the same arity $X$, namely the order of $\T(X\pitchfork T,T)$. In particular, if $x\leq y$ in $X$, then $\pi_x=\tau(i_x)\leq\tau(i_y)=\pi_y$, where $i_x\colon 1\to X$ is the map that selects $x$.

The category $\Alg{\T}$ of $\T$-algebras and $\T$-algebra homomorphisms is $\Ord$-enriched by the pointwise preorder. We say that an $\Ord$-enriched category $\V$ is a \emph{variety of ordered algebras} when $\V\cong \Alg{\T}$, for some $\Ord$-algebraic theory $\T$. A variety $\V$ is pointed when there exists precisely one operation of arity 0 in $\T$, i.e. when $\T$ admits one constant, usually denoted by 0.

The category $\Alg{\T}$ admits all weighted (co)limits, and its forgetful functor $U\colon\Alg\T\to\Ord$ has an $\Ord$-enriched left adjoint $F$, defined by
$F(1)=\T(T,-)$,
and, consequently, for every $X\in\J$, $F(X)=\T(X\pitchfork T,-)$ because, for any $\T$-algebra $A$,
\[\begin{array}{rcll}
\Alg{\T}(\T(X\pitchfork T,-),A)&\cong&\Nat(\T(X\pitchfork T,-),A)\\
&\cong&A(X\pitchfork T)&\mbox{ (by Yoneda Lemma)}\\
&\cong&X\pitchfork A(T)&\mbox{ ($A$ preserves powers)}\\
&\cong&\Ord(X,A(T))\cong \Ord(X,U(A)).\end{array}\]

\section{Protomodularity of varieties of ordered algebras}\label{Section: proto}

We collect from~\cite{CMR} the main aspects concerning an order enrichment of the notion of protomodular category~\cite{NEKEAC}. The main idea is to consider some of the characteristic properties of protomodularity with respect to
comma objects instead of pullbacks.

Let $\C$ denote a finitely complete $\Ord$-category with comma objects. Given an object $B$, as usual we denote by $\C/B$ the slice category of $\C$ over $B$. A \defn{point over $B$} is a morphism from the terminal object $1_B\colon B\to B$ into an arbitrary object $f\colon A\to B$ of $\C/B$; that is, a $\C$-morphism $s\colon B\to A$ so that $fs=1_B$. Hence a point in $\C$ is given by a split epimorphism $f\colon A\to B$ with a chosen splitting $s\colon B\to A$. We denote by $\Pt_B(\C)$ the category of points over $B$ in $\C$.

Given a morphism $g\colon C\to B$ in $\C$, we can define two possible functors by taking comma objects along points. For a point $\xymatrix{A \ar@<2pt>[r]^-f & B \ar@<2pt>[l]^-s}$, we form the comma objects
\[
\xymatrix@=30pt{\comma{g}{f} \ar[r]^-{\pi_2} \ar@<-2pt>[d]_-{\pi_1} \ar@{}[dr]|-{\le} & A \ar@<-2pt>[d]_-f \\ C \ar@<-2pt>@{-->}[u]_-{s_1}\ar[r]_-g & B \ar@<-2pt>[u]_-s}
\;\;\;\;\;\;\text{and}\;\;\;\;\;\;
\xymatrix@=30pt{\comma{f}{g} \ar@<2pt>[r]^-{\rho_2} \ar[d]_-{\rho_1} \ar@{}[dr]|-{\le} & C \ar[d]^-g \ar@<2pt>@{-->}[l]^-{s_2} \\ A \ar@<2pt>[r]^-f & B, \ar@<2pt>[l]^-s}
\]
where the dashed arrows $s_1=\la 1_C,sg\ra$ and $s_2=\la sg,1_C\ra$ are induced by the universal property of the comma objects.
These constructions define respectively the \emph{vertical} and \emph{horizontal comma object change-of-base functors} $V_g\colon \Pt_B(\C)\to \Pt_C(\C)$ and $H_g\colon \Pt_B(\C)\to \Pt_C(\C)$: if $(f,s)$ is a point over $B$, then $V_g(f,s)=(\pi_1,s_1)$ and $H_g(f,s)=(\rho_2,s_2)$, and, for any morphism $\gamma\colon (f,s)\to(f',s')$ in $\Pt_B(\C)$, $V_g(\gamma)$ and $H_g(\gamma)$ are induced by the universal property of the corresponding comma objects.

%

\begin{definition}\label{Ord-enriched protomodularity} A finitely complete $\Ord$-category $\C$ with comma objects and $2$-pullbacks is said to be \defn{lax protomodular} (respectively \defn{colax protomodular}) when the comma object functor $V_{g}$ (respectively $H_g$) is conservative for any morphism $g$ in $\C$.
\end{definition}

As in the classical case, it is easy to prove that, in a pointed $\Ord$-category $\C$ with comma objects, the fact that all functors $V_{g}$ are conservative is equivalent to the fact that the functors $V_{i_A}$ are conservative, where $i_A \colon 0 \to A$ is the only arrow from the zero object (and the same holds for the $H$'s). It follows that, in the pointed context, $\C$ is (co)lax protomodular if and only if the following \emph{(co)lax version of the Split Short Five Lemma} holds:

\begin{theorem}{\emph{(\cite{CMR})}}\label{lax SS5L} A pointed finitely complete $\Ord$-category $\C$ with comma objects and 2-pullbacks is lax protomodular if and only if, given a commutative diagram of split sequences of the form
\[
 \xymatrix{ \comma{i_B}{f} \ar[d]_{a} \ar[r] & A \ar[d]_{b} \ar@<2pt>[r]^-f & B \ar[d]^{c} \ar@<2pt>[l]^-s \\
\comma{i_{B'}}{f'} \ar[r] & A' \ar@<2pt>[r]^-{f'} & B', \ar@<2pt>[l]^-{s'} }
\]
if $a$ and $c$ are isomorphisms, so is $b$.
\end{theorem}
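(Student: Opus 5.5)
The proof proceeds in two steps. First I reduce lax protomodularity to conservativity of the single functor $V_{i_B}$ for each object $B$. Then I identify conservativity of $V_{i_B}$ with the stated lax Split Short Five Lemma.

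\textbf{Step 1: reduction to $V_{i_B}$.} Take any $g\colon C\to B$. Since $\C$ is pointed, $i_B=g\,i_C$. Comma objects paste along composition in the appropriate variable: for a point $(f,s)$ over $B$, $\comma{i_B}{f}$ is the comma object of $i_C$ along the projection $\pi_1\colon \comma{g}{f}\to C$. This is checked directly by the universal properties, using that the 2-cell is carried entirely by the outer square and that the square over $C$ may be taken strictly commutative via 2-pullbacks. Hence $V_{i_B}\cong V_{i_C}\circ V_g$, up to natural isomorphism.

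Assume every $V_{i_C}$ is conservative. If $V_g(\gamma)$ is an isomorphism, then so is $V_{i_C}V_g(\gamma)\cong V_{i_B}(\gamma)$. So it suffices to show that conservativity of all $V_{i_B}$ implies conservativity of $V_g$. Apply conservativity of $V_{i_C}$ to $V_g(\gamma)$, together with the pasting isomorphism $V_{i_C}V_g\cong V_{i_B}$. Then $V_{i_B}(\gamma)$ iso gives $V_{i_C}(V_g\gamma)$ iso, which gives $V_g\gamma$ iso.

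This only recovers the implication in one direction. To get $\gamma$ itself I argue the other way round: $V_g\gamma$ iso implies $V_{i_B}\gamma\cong V_{i_C}V_g\gamma$ iso, and then conservativity of $V_{i_B}$ gives that $\gamma$ is an iso. So conservativity of all $V_{i_B}$ alone suffices, and the converse is trivial.

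\textbf{Step 2: translation into the diagram.} A morphism $\gamma=b\colon (f,s)\to(f',s')$ in $\Pt_B(\C)$ with the same base corresponds to the diagram with $c=1_B$. Its image $V_{i_B}(b)$ is the induced map $a\colon \comma{i_B}{f}\to\comma{i_B}{f'}$, because $\comma{0}{\cdot}$ is the point over $0$ given by this lax kernel. So conservativity of $V_{i_B}$ is exactly the statement with $c$ an identity.

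For general isomorphisms $c$, I transport $(f',s')$ along $c^{-1}$. The point $(c^{-1}f',\,s'c)$ lies over $B$, its lax kernel is isomorphic to $\comma{i_{B'}}{f'}$, and $b$ becomes a morphism in $\Pt_B(\C)$. Conversely, any morphism in $\Pt_B(\C)$ gives such a diagram with $c=1$. This yields the equivalence.

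\textbf{Main obstacle.} I expect the obstacle to be the pasting lemma for comma objects in Step 1. Comma squares do not paste as freely as pullbacks, and one must check that the composite of a strict 2-pullback-type square with a comma square is again a comma square. The 2-dimensional part of the universal property also needs checking. Here the hypothesis of 2-pullbacks is used, together with the fact that $0$ is a zero object, so that the relevant square with vertex $0$ is trivially a 2-pullback.
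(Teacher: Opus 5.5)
Your overall plan matches the route the paper indicates: reduce to the functors $V_{i_B}$, read conservativity of $V_{i_B}$ as the diagram with $c=1_B$, and transport along $c^{-1}$ for general $c$. Step 2 is fine. The gap is the pasting identification in Step 1.

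\textbf{The error.} $\comma{i_B}{f}$ is not the comma object $\comma{i_C}{\pi_1}$, and $V_{i_B}\not\cong V_{i_C}\circ V_g$ in general. Here is a concrete counterexample. Work in pointed ordered sets, the pointed variety with one constant and no other operations. Take $B=\{0<1\}$ pointed at $0$, $g=1_B$ and $f=s=1_B$. Then $\comma{1_B}{f}=\{(b,a): b\le a\}$ has three elements, and all of them satisfy $0\le b$, so $\comma{i_B}{\pi_1}$ also has three elements. By contrast, $\comma{i_B}{f}=\{a: 0\le a\}$ has only two. Equivalently, $V_{1_B}$ is not isomorphic to the identity functor. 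Your justification in the last paragraph is exactly where this breaks. You say that since $0$ is a zero object, the square with vertex $0$ over $C$ is ``trivially a 2-pullback''. But the comma along $i_C$ imposes $0\le \pi_1(x)$, whereas the 2-pullback imposes $\pi_1(x)=0$, and $0$ being a zero object does not make these conditions coincide.

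\textbf{The fix.} Use the pasting lemma the paper itself uses in the proof of Theorem~\ref{lax proto ord-varieties}: a 2-pullback stacked on a comma square is again a comma square. It gives that $\comma{g\,i_C}{f}$ is the 2-pullback of $\pi_1\colon \comma{g}{f}\to C$ along $i_C$. That is, $V_{i_B}\cong i_C^{*}\circ V_g$ naturally, where $i_C^{*}\colon \Pt_C(\C)\to \Pt_0(\C)\simeq\C$ is the ordinary (2-)pullback, i.e.\ kernel, functor, not $V_{i_C}$.

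With this replacement your third paragraph goes through verbatim. If $V_g(\gamma)$ is invertible, so is $i_C^{*}V_g(\gamma)\cong V_{i_B}(\gamma)$, and conservativity of $V_{i_B}$ alone then makes $\gamma$ invertible. The argument only ever needed $V_{i_B}$ to factor through $V_g$ up to isomorphism; it did not need the middle functor to be $V_{i_C}$. Your second paragraph, which derives invertibility of $V_g(\gamma)$ from that of $V_{i_B}(\gamma)$, goes in the wrong direction and should simply be deleted.
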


\begin{corollary}{\emph{(\cite{CMR})}}\label{colax SS5L} A pointed finitely complete $\Ord$-category $\C$ with comma objects and 2-pullbacks is colax protomodular if and only if, given a commutative diagram of split sequences of the form
\[ \xymatrix{ \comma{f}{i_B} \ar[d]_{a} \ar[r] & A \ar[d]_{b} \ar@<2pt>[r]^-f & B \ar[d]^{c} \ar@<2pt>[l]^-s \\
\comma{f'}{i_{B'}} \ar[r] & A' \ar@<2pt>[r]^-{f'} & B', \ar@<2pt>[l]^-{s'} } \]
if $a$ and $c$ are isomorphisms, then so is $b$.
\end{corollary}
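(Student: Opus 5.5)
The plan is to deduce the corollary from Theorem~\ref{lax SS5L} by duality, namely by passing to the dual $\Ord$-category obtained by reversing the order on the hom-sets (the $2$-cell dual, not the $1$-cell dual). Let $\C^{\mathrm{co}}$ denote the $\Ord$-category with the same objects and morphisms as $\C$, where $\C^{\mathrm{co}}(X,Y)$ carries the opposite preorder of $\C(X,Y)$. Composition is still monotone in each variable, so $\C^{\mathrm{co}}$ is again an $\Ord$-category.

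The first step is to check that $\C^{\mathrm{co}}$ inherits the standing hypotheses. It is pointed with the same zero object. Its finite (conical) limits coincide with those of $\C$, since they depend only on the underlying category together with the order on hom-sets, and reversing all orders preserves isomorphisms of preordered sets. A 2-pullback in $\C$ is again a 2-pullback in $\C^{\mathrm{co}}$, because its defining condition is symmetric in the order. Finally, a comma object $\comma{f}{g}$ in $\C$, with $f\rho_1\le g\rho_2$, is precisely a comma object $\comma{g}{f}$ in $\C^{\mathrm{co}}$: the 2-cell reverses, and the universal property, which is stated in terms of $\le$ on hom-sets, transforms accordingly.

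The second step is to compare the change-of-base functors. Under this correspondence, the horizontal comma object functor $H_g$ of $\C$ coincides with the vertical one $V_g$ of $\C^{\mathrm{co}}$. Explicitly, $\comma{f}{g}$ with projections $(\rho_1,\rho_2)$ and splitting $s_2=\la sg,1_C\ra$ becomes $\comma{g}{f}$ with projections $(\pi_1,\pi_2)=(\rho_2,\rho_1)$ and splitting $s_1=\la 1_C,sg\ra$. Consequently, $\C$ is colax protomodular if and only if $\C^{\mathrm{co}}$ is lax protomodular. Likewise, the kernel-type object $\comma{f}{i_B}$ in $\C$ is $\comma{i_B}{f}$ in $\C^{\mathrm{co}}$. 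The diagram of the corollary is therefore exactly the diagram of Theorem~\ref{lax SS5L} read in $\C^{\mathrm{co}}$, and isomorphisms are the same in both categories.

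Applying Theorem~\ref{lax SS5L} to $\C^{\mathrm{co}}$ then yields the statement. The only point requiring care, and the main obstacle, is checking that the induced comparison maps (such as $a$ and $s_2$) agree under the identification. This reduces to the uniqueness clause in the universal property of comma objects, which is unaffected by reversing the 2-cells.
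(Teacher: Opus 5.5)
Your proof is correct, and it is the expected argument. The paper gives no proof here; it quotes this result from earlier work as a corollary of Theorem~\ref{lax SS5L}. It follows exactly as you describe, by passing to $\C^{\mathrm{co}}$. There $\comma{f}{g}$ becomes $\comma{g}{f}$, so $H_g$ becomes $V_g$ and $\comma{f}{i_B}$ becomes $\comma{i_B}{f}$. Pointedness, finite limits, 2-pullbacks and isomorphisms are unchanged, and the agreement of the induced maps (such as $s_2$ and $a$) follows automatically from uniqueness in the universal property.
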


Our aim here is to characterise pointed varieties of ordered algebras which are (co)lax protomodular using the (co)lax version of the Split Short Five Lemma stated in Theorem~\ref{lax SS5L} and Corollary~\ref{colax SS5L}. The proof is very similar to the one which characterises protomodular varieties of universal algebras~\cite{BJ}.

We start by describing \defn{comma objects} in these varieties.  The comma object of an ordered pair of morphisms $(f\colon A\to B, g\colon C\to B)$ in $\Alg{\T}$ is defined as in $\Ord$
\begin{equation}
\label{comma f/g}
\vcenter{\xymatrix@=30pt{ \comma{f}{g} \ar[r]^-{\pi_2} \ar[d]_-{\pi_1} \ar@{}[dr]|-{\le} & C \ar[d]^-g \\
								 A \ar[r]_-f & B,}}
\end{equation}
where $\comma{f}{g}=\{(a,c)\in A\times C: f(a)\le g(c)\}$ with the order induced from that of $A\times C$, and $\pi_1$, $\pi_2$ are projections. We must prove that $\comma{f}{g}$ is a $\T$-subalgebra of $A\times C$: if $\theta\colon X\pitchfork T \to T$ is an operation of arity $X$ in $\T$, then, for any monotone map $\xx\colon X\to f/g$, $\theta_{\comma{f\;}{\;g}}(\xx)=\theta_{A\times C}(\langle\pi_1 \xx,\pi_2 \xx\rangle)=(\theta_A(\pi_1 \xx),\theta_C(\pi_2 \xx))\in \comma{f}{g}$, since $f(\theta_A(\pi_1 \xx))=\theta_B(f \pi_1 \xx)\leq\theta_B(g \pi_2 \xx)=g(\theta_C(\pi_2\xx))$; hence $\theta_{A\times C}$ (co)restricts to $\theta_{\comma{f\;}{\;g}}\colon \comma{f}{g}^{X}\to \comma{f}{g}$.

\begin{theorem}\label{lax proto ord-varieties} Let $\V(\cong \Alg{\T})$ be a pointed variety of ordered algebras. The following statements are equivalent:
\begin{itemize}
	\item[(i)] $\V$ is a lax protomodular category.
	\item[(ii)] there exist $n$ binary operations $\alpha_1,\cdots, \alpha_n$ and an $(n+1)$-ary operation $\theta$ in the theory $\T$ of $\V$ such that:
	\begin{equation}\label{inequalities for lax proto}
	\left\{\begin{array}{l}
		0\le \alpha_i(x,x), \;\mbox{ for all }i\in\{1, \cdots, n\} \\
		x=\theta(\alpha_1(x,y),\cdots, \alpha_n(x,y),y).
		\end{array}\right.
	\end{equation}
\end{itemize}
\end{theorem}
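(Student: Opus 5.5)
We prove the equivalence through the lax Split Short Five Lemma (Theorem~\ref{lax SS5L}), adapting the Bourn--Janelidze argument from \cite{BJ}. Comma objects in $\V$ are computed as in $\Ord$. Hence, for a point $(f,s)$ over $B$, the object $\comma{i_B}{f}$ is $\{(0,a)\in 0\times A : 0\le f(a)\}\cong\{a\in A: 0\le f(a)\}$. This is the ``lax kernel'' $K$ of $f$, with the order induced from $A$.

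\emph{(ii)$\Rightarrow$(i).} Take a diagram as in Theorem~\ref{lax SS5L} with $a$ and $c$ isomorphisms, and write $k\colon K\to A$ and $k'\colon K'\to A'$ for the comma projections. The key observation is that for $x\in A$ each $\alpha_i(x,sf(x))$ lies in $K$. Indeed, $f(\alpha_i(x,sf(x)))=\alpha_i(f(x),f(x))\ge 0$. The second identity then gives the decomposition
\[x=\theta\bigl(\alpha_1(x,sf x),\dots,\alpha_n(x,sf x),sf x\bigr).\]
\emph{Injectivity, and reflection of order, for $b$.} Suppose $b(x)\le b(x')$. Applying $f'$ gives $c f(x)\le c f(x')$, so $f(x)\le f(x')$ because $c$ is an isomorphism; hence $sf(x)\le sf(x')$. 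For the kernel components we have $\alpha_i(b x,bsfx)\le\alpha_i(bx',bsfx')$ in $A'$. These elements equal $b\alpha_i(x,sfx)=k'a(\cdot)$, and both $a$ and $k'$ reflect order ($K'$ carries the induced order). So $\alpha_i(x,sfx)\le\alpha_i(x',sfx')$ in $K$. Since $\theta$ is monotone, the decomposition yields $x\le x'$.
\emph{Surjectivity of $b$.} Given $y\in A'$, set $y_i=\alpha_i(y,s'f'y)\in K'$ and pick $u_i=a^{-1}(y_i)$. Pick $v=c^{-1}f'(y)$. Then $b\bigl(\theta(u_1,\dots,u_n,sv)\bigr)=\theta(y_1,\dots,y_n,s'f'y)=y$.
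Thus $b$ is a bijection that is monotone and reflects order, i.e.\ an isomorphism in $\V$.

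\emph{(i)$\Rightarrow$(ii).} We use free algebras. Let $F(X)$ be the free algebra on an ordered set $X$; its elements are terms, ordered through $\T$. Consider the point
\[f\colon F(x,y)\to F(y),\quad x,y\mapsto y,\]
with splitting the inclusion $s$. Here $\{x,y\}$ is discrete. Let $K=\{t\in F(x,y): 0\le f(t)\}$ be its comma object, with projection $k$. Form the subalgebra $A'\subseteq F(x,y)$ generated by $K\cup\{y\}$, with the induced order. The point restricts to $(f',s')$ on $A'$, and the inclusion $b\colon A'\to F(x,y)$ is a morphism of points. Its comma object is again $K$, so the induced map $a$ is the identity, and $c=1$. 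By Theorem~\ref{lax SS5L}, $b$ is an isomorphism, so $x\in A'$.

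Elements of $A'$ have the form $\theta(t_1,\dots,t_n,y)$ with $t_i\in K$ and $\theta$ an $(n+1)$-ary term. One may put $y$ last and drop the constant $0$ as an argument, since it is a nullary term. Write $t_i=\alpha_i(x,y)$ for binary terms $\alpha_i$. Then $x=\theta(\alpha_1(x,y),\dots,\alpha_n(x,y),y)$ holds in the free algebra, hence in all algebras. The condition $0\le f(t_i)=\alpha_i(y,y)$ holds in $F(y)$, hence $0\le\alpha_i(x,x)$ in every algebra.

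The main obstacle is justifying these free-algebra facts in the enriched setting. First, we need that the subalgebra generated by $K\cup\{y\}$ consists of such terms; this uses that operations of discrete arity compose, and that the operations of nondiscrete arity needed are only applied to tuples that are already ordered. Second, inequalities between terms in $F(X)$ must yield inequalities valid in every algebra. Both follow from $F(X)=\T(X\pitchfork T,-)$ and the Yoneda lemma. Nondiscrete arities may seem to force $\theta$ to have an ordered arity. We would handle this by noting that, via Yoneda, every element of $F(\{x,y\})$ is an operation of discrete arity $2$. Hence the subalgebra generated by $K\cup\{y\}$ can be described by substituting into discrete-arity operations, which gives a genuine $(n+1)$-ary $\theta$.
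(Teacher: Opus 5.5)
Your proposal follows the paper's proof in both directions. For (i)$\Rightarrow$(ii) you use the same point $F(\{x,y\})\to F(\{y\})$ and the same lax kernel $K$. Your subalgebra $A'$ generated by $K\cup\{y\}$, with the induced order, is the paper's $M$ from the factorisation of $K+\T(T,T)\to\T(T^2,T)$, and you apply the lax Split Short Five Lemma to its inclusion exactly as the paper does. For (ii)$\Rightarrow$(i) you run the same Bourn--Janelidze argument; you decompose along $sf(x)$ rather than along an auxiliary element, which is immaterial.

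You also check that $b$ reflects the order. The paper proves only bijectivity, so this is a real improvement: a bijective homomorphism of preordered algebras need not be an isomorphism. One small slip: in a preorder, reflecting the order does not imply injectivity. Run the same computation with equalities in place of inequalities. From $b(x)=b(x')$ you get $f(x)=f(x')$, then $\alpha_i(x,sfx)=\alpha_i(x',sfx')$ because $k'a$ is injective, hence $x=x'$.

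The step you yourself flag as the main obstacle is not settled by your Yoneda remark. Knowing that every element of $F(\{x,y\})$ is a binary operation of discrete arity says nothing about writing it as a discrete operation applied to $t_1,\dots,t_n,\pi_y$. Concretely, let $\omega$ have non-discrete arity $Y$, and let $(d_j)_{j\in Y}$ be monotone in $F(\{x,y\})$ with $d_j=\theta_j\circ\langle t_1,\dots,t_n,\pi_y\rangle$. The natural candidate $\omega\circ\langle\theta_j\rangle_{j\in Y}$ exists only if $(\theta_j)_j$ is monotone in $\T(T^{n+1},T)$. The inequalities $d_j\le d_{j'}$ hold only after substituting $t_1,\dots,t_n,\pi_y$, so they do not give this.

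Your description of the generated subalgebra is correct when every operation of $\T$ is the restriction of one of discrete arity, i.e.\ $\omega=\omega'\circ\tau(\iota)$ with $\iota\colon(Y,=)\to(Y,\le)$ the identity. In that case $\omega\circ\langle d_j\rangle_j=\omega'\circ\langle d_j\rangle_j$ and the monotonicity issue disappears. It is not justified in general.

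The paper's proof makes the same move without comment: it reads $\pi_x$ off a word in $K+\T(T,T)$ and asserts the existence of a $(2n)$-ary $\gamma$. So your proposal is no less complete than the paper's argument. Still, the justification you offer for this step does not work, and for theories with operations of genuinely ordered arity this step needs an actual argument.
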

\begin{proof}
(i) $\To$ (ii) Consider the arities $(X,=)$ = $(\{x,y\}, =)$ and $(Z,=)$ = $(\{z\},=)$. We denote by $\pi_x,\pi_y$ the projections of $X\pitchfork T=T^2$ onto $T$ (see Section~\ref{Vars of ordered algs}). We define the (monotone) maps $f\colon (X,=)\to (Z,=)$ by $f(x)=f(y)=z$, and $s\colon (Z,=) \to (X,=)$ by $s(z)=y$. Applying the free $\T$-algebra functor $F\colon \Ord\to\Alg{\T}$, we have $F(X)=\T(T^2,-), F(Z)=\T(T,-)$ and $F(f), F(s)$ are natural transformations whose $T$-components are
\[
	\begin{array}{rcll}
		F(f)_T: \T(T^2,T) & \longrightarrow & \T(T,T) \\
		\xymatrix{T^2 \ar[r]^-\alpha & T} & \longmapsto & \xymatrix@C=60pt{T \ar[r]^{\tau(f)=\la 1_T,1_T\ra} & T^2 \ar[r]^-\alpha & T}
	\end{array}
\]
and
\[
	\begin{array}{rcll}
		F(s)_T: \T(T,T) & \longrightarrow & \T(T^2,T). \\
		\xymatrix{T \ar[r]^-\mu & T} & \longmapsto & \xymatrix@C=40pt{T^2 \ar[r]^{\tau(s)=\pi_y} & T \ar[r]^-\mu & T}
	\end{array}
\]
We adopt the usual simplification and write these $T$-components as $F(f)$ and $F(s)$. Since $\V$ is a pointed variety, the (free) algebras $\T(T^2,T)$ and $\T(T,T)$ admit a \emph{zero}, denoted $0\colon T^2\to T$ and $0\colon T\to T$.

We consider the split sequence
\[
	\xymatrix{K=\comma{i_*}{F(f)} \ar@{^{(}->}[r]^-k & \T(T^2,T) \ar@<2pt>[r]^-{F(f)} & \T(T,T) \ar@<2pt>[l]^-{F(s)}},
\]
where $i_*=i_{\T(T,T)}$ is the unique morphism from $\T(T^0,T)=\{0\}$ to $\T(T,T)$, $K=\{\alpha\in \T(T^2,T): \xymatrix@C=30pt{T \ar@/^/[r]^0 \ar@/_/[r]_-{\alpha\la 1_T,1_T\ra} \ar@{}[r]|{\rotatebox{-90}{$\le$}} & T}\}\subseteq \T(T^2,T)$, and $k$ is the inclusion homomorphism.

Take the (regular epimorphism, monomorphism) factorisation of $\twosum{k}{F(s)}=me$
\begin{equation} \label{me}
\vcenter{
\xymatrix@C=50pt@R=35pt{K\ar[rd]_{e\,\iota_1}\ar@/_1pc/[rdd]_{k}\ar[r]^-{\iota_1} &K+\T(T,T)\ar@{>>}[d]^e & \T(T,T) \ar[l]_-{\iota_2}\ar@<+1.5pt>[ld]^{e\,\iota_2}\ar@/^1pc/@<1.5pt>[ldd]^(.6){F(s)}\\
& M\ar@{ >->}[d]^m\ar@<1.5pt>[ru]^{F(f)m}\\
& \T(T^2,T).\ar@/_1pc/@<1.5pt>[ruu]^(.4){F(f)}}
}
\end{equation}
Note that in the diagram
\[
	\xymatrix{K \ar@{^{(}->}[r]^-{e\,\iota_1} \ar@{=}[d] \pullback & M \ar@{ >->}[d]^-m \\
	K=\comma{i_*}{F(f)} \ar@{^{(}->}[r]^-k \ar[d] \ar@{}[dr]|-{\le} & \T(T^2,T) \ar[d]^-{F(f)} \\
	\T(T^0,T)=\{0\} \ar[r]_-{i_*} & \T(T,T)
	}
\]
the top square is a pullback and the bottom square is a comma object, so that the total diagram is a comma object, i.e. $K=\comma{i_*}{F(f)m}$ (see~\cite{Vassilis,CMR}).

We get a commutative diagram of split sequences
\begin{equation}\label{special SS5L diagram}
\vcenter{
	\xymatrix{ K(=\comma{i_*}{F(f)m}) \ar@{=}[d] \ar@{^{(}->}[r]^-{e\,\iota_1} & M \ar@{ >->}[d]_{m} \ar@<2pt>[r]^-{F(f)m} & \T(T,T) \ar@{=}[d] \ar@<2pt>[l]^-{e\,\iota_2} \\
K=\comma{i_*}{F(f)} \ar@{^{(}->}[r]^-k & \T(T^2,T) \ar@<2pt>[r]^-{F(f)} & \T(T,T). \ar@<2pt>[l]^-{F(s)} } }
\end{equation}

By assumption $\V$ satisfies Theorem~\ref{lax SS5L} applied to diagram \eqref{special SS5L diagram}, thus $m$ is an isomorphism. As a consequence, given $\pi_x\in\T(T^2,T)$, there must be a word $w$ in $K+\T(T,T)$ such that $e(w)=\pi_x$. Assume $w=\underline{\beta_1}\overline{\mu_1}\cdots \underline{\beta_n}\overline{\mu_n}$, for some $n\in\N$, $\beta_1, \cdots, \beta_n\in K$, $\mu_1,\cdots, \mu_n\in \T(T,T)$. From the construction of diagram \eqref{me}, we deduce that there exists a $(2n)$-ary operation $\gamma$ such that $\gamma(\beta_1,F(s)(\mu_1),\cdots, \beta_n,F(s)(\mu_n))=\pi_x$ (using here the fact that $k$ is just an inclusion). It is well-known that we can consider another $(2n)$-operation $\gamma'$ such that $\gamma'(\beta_1,\cdots, \beta_n,F(s)(\mu_1), \cdots ,F(s)(\mu_n))$ $=$ $\gamma(\beta_1,F(s)(\mu_1),\cdots, \beta_n,F(s)(\mu_n))$. We can now consider an $(n+1)$-operation $\gamma''$ such that 
\[
	\gamma''(\beta_1,\cdots, \beta_n,F(s)(\mu))=\gamma'(\beta_1,\cdots, \beta_n,F(s)(\mu_1), \cdots ,F(s)(\mu_n)),
\]
where $\mu=\mu_1\cdots \mu_n\in \T(T,T)$. Finally, there exist an $(n+1)$-ary operation $\theta$ and binary operations $\alpha_1=\beta_1$, $\cdots$, $\alpha_{n-1}=\beta_{n-1}$, $\alpha_n=\mu\beta_n$ such that $\theta(\alpha_1, \cdots, \alpha_n, F(s)(1_T))=\pi_x$, i.e. $\theta(\alpha_1, \cdots, \alpha_n, \pi_y)=\pi_x$.

Each $\alpha_i\in K$ gives $0\le \alpha_i \la 1_T,1_T\ra$ in $\T$. Also $\theta \la \alpha_1, \cdots, \alpha_n,\pi_y\ra = \pi_x$ in $\T$. For any $\T$-algebra $A\colon \T \to \Ord$, we get
\[
\begin{array}{rcl}
	A: \T & \longrightarrow & \Ord. \\
	\xymatrix@C=15pt@R=10pt{T \ar[rr]^0 \ar[dr]_-{\la 1_T,1_T\ra} & {} \ar@{}[d]|-{\rotatebox{-90}{$\le$}} & T \\ & T^2 \ar[ur]_-{\alpha_i}} & \longmapsto &
	\xymatrix@C=25pt@R=10pt{A \ar[rr]^0 \ar[dr]_-{\la 1_A,1_A\ra} & {} \ar@{}[d]|-{\rotatebox{-90}{$\le$}} & A \\ & A^2 \ar[ur]_-{\alpha_i}}\\ \\
	\xymatrix@C=60pt{T^2 \ar[r]^-{\la \alpha_1,\cdots,\alpha_n,\pi_y\ra}\ar[dr]_-{\pi_x}^-{\circlearrowright} & T^{n+1} \ar[d]^-{\theta} \\ & T} & \longmapsto &
	\xymatrix@C=60pt{A^2 \ar[r]^-{\la \alpha_1,\cdots,\alpha_n, \pi_y\ra}\ar[dr]_-{\pi_x}^-{\circlearrowright} & A^{n+1} \ar[d]^-{\theta_A} \\ & A}
\end{array}
\]
From the diagrams on the right we conclude that, for elements $x,y\in A$, we have
\[
	\left\{\begin{array}{l}
		 0\le \alpha_i(x,x), \;\mbox{ for all }i\in\{1, \cdots, n\}  \\
		x=\theta_A(\alpha_1(x,y), \cdots, \alpha_n(x,y),y).
	\end{array}\right.
\]

(ii) $\To$ (i) Consider a commutative diagram of split sequences of the form
\[
 \xymatrix{ K=\comma{i_B}{f} \ar[d]|{\cong}_{a} \ar@{^{(}->}[r] & A \ar[d]_{b} \ar@<2pt>[r]^-f & B \ar[d]|{\cong}^{c} \ar@<2pt>[l]^-s \\
K'=\comma{i_{B'}}{f'} \ar@{^{(}->}[r] & A' \ar@<2pt>[r]^-{f'} & B', \ar@<2pt>[l]^-{s'} }
\]
where $a$ and $c$ are isomorphisms. We may assume that $B=B'$ and $c=1_B$. By the commutativity of the diagram, we have $a=b|_K$. To prove that $b$ is injective, suppose that $b(x)=b(y)$. For each $i$, $0\le \alpha_i(b(x),b(y))=b(\alpha_i(x,y))$. We conclude that each $\alpha_i(x,y)\in K$, since $f(\alpha_i(x,y))=f'(b(\alpha_i(x,y))\ge 0$; also each $\alpha_i(y,y)\in K$, since $\alpha_i(y,y)\ge 0$ implies $f(\alpha_i(y,y))\ge 0$. From $a(\alpha_i(x,y))=b(\alpha_i(x,y))=\alpha_i(b(x),b(y))=\alpha_i(b(y),b(y))=b(\alpha_i(y,y))=a(\alpha_i(y,y))$ and the fact that $a$ is an isomorphism, we conclude that $\alpha_i(x,y)=\alpha_i(y,y)$, for each $i$. To finish, we have $x=\theta_X(\alpha_1(x,y), \cdots, \alpha_n(x,y),y)=\theta_X(\alpha_1(y,y), \cdots, \alpha_n(y,y),y)=y$.

To prove that $b$ is surjective, consider an arbitrary element $x'\in A'$. Since $f'(x')\in B'=B$ and $f$ is surjective, there exists $x\in A$ such that $f'(x')=f(x)=f'(b(x))$. For each $i$, $\alpha_i(x',b(x))\in K'=\comma{i_{B'}}{f'}$, since $f'(\alpha_i(x',b(x)))=\alpha_i(f'(x'),f'(b(x)))\ge 0$. Then, $z_i:=a^{-1}(\alpha_i(x',b(x)))\in K=\comma{i_B}{f}$. Note that, for each $i$, $b(z_i)=a(z_i)=\alpha_i(x',b(x))$. Hence $\overline{x}=\theta(z_1, \cdots, z_n,x)\in A$ is such that
\[
\begin{array}{ll}
	b(\overline{x}) &=b(\theta(z_1, \cdots, z_n,x)) = \theta_{X'}(b(z_1),\cdots,b(z_n),b(x)) = \\
	                &=\theta_{X'}(\alpha_1(x',b(x)), \cdots, \alpha_n(x',b(x)), b(x)) = x'.
\end{array}
\]
\end{proof}

\begin{corollary}\label{colax proto ord-varieties} Let $\V(\cong \Alg{\T})$ be a pointed variety of ordered algebras. The following statements are equivalent:
\begin{itemize}
	\item[(i)] $\V$ is a colax protomodular category.
	\item[(ii)] there exist $n$ binary operations $\alpha_1,\cdots, \alpha_n$ and an $(n+1)$-ary operation $\theta$ in the theory $\T$ of $\V$ such that:
	\[
	\left\{\begin{array}{l}
		\alpha_i(x,x)\le 0, \;\mbox{ for all } i\in\{1, \cdots, n\} \\
		x=\theta(\alpha_1(x,y),\cdots, \alpha_n(x,y),y).
		\end{array}\right.
	\]
\end{itemize}
\end{corollary}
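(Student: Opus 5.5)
The plan is to repeat the proof of Theorem~\ref{lax proto ord-varieties} almost word for word, replacing the vertical comma object $\comma{i_B}{f}$ by the horizontal one $\comma{f}{i_B}$ and using Corollary~\ref{colax SS5L} in place of Theorem~\ref{lax SS5L}. In $\Alg{\T}$ we have $\comma{f}{i_B}=\{a\in A: f(a)\le 0\}$, described as a comma object exactly as in \eqref{comma f/g}. Every inequality ``$0\le$'' in the previous argument therefore becomes ``$\le 0$'', while the equation $x=\theta(\alpha_1(x,y),\dots,\alpha_n(x,y),y)$ is unaffected. One could try to get this by a duality argument (passing to the theory with opposite order), but carrying out the direct proof is just as short, so I would do that.

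For (i) $\To$ (ii), I take the same free split epimorphism $F(f)\colon\T(T^2,T)\to\T(T,T)$ with splitting $F(s)$. Now
\[K=\comma{F(f)}{i_*}=\{\alpha\in\T(T^2,T): \alpha\la 1_T,1_T\ra\le 0\}.\]
I factor $\twosum{k}{F(s)}=me$ as in \eqref{me}. The top square of the corresponding diagram is a pullback, and pasting it onto the comma square gives $K=\comma{F(f)m}{i_*}$; this is the same pasting result for comma objects, used now in the other variable. This yields a diagram like \eqref{special SS5L diagram}, to which Corollary~\ref{colax SS5L} applies, so $m$ is an isomorphism. The word-manipulation argument is order-free and is copied unchanged. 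It produces $\theta$ and $\alpha_1,\dots,\alpha_n$ with $\theta(\alpha_1,\dots,\alpha_n,\pi_y)=\pi_x$. Here $\alpha_n=\mu\beta_n$, and $\alpha_n$ stays in $K$ because $\mu$ is monotone and preserves $0$: $\mu\beta_n\la1,1\ra\le\mu 0=0$. Applying an algebra $A$ then gives the stated identities.

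For (ii) $\To$ (i), take $a,c$ isomorphisms and assume $c=1_B$. For injectivity, suppose $b(x)=b(y)$. Then $f(\alpha_i(x,y))=f'b(\alpha_i(x,y))=\alpha_i(f'b(x),f'b(x))\le 0$, so $\alpha_i(x,y)\in K$; likewise $f(\alpha_i(y,y))=\alpha_i(f(y),f(y))\le 0$, so $\alpha_i(y,y)\in K$. Since $a$ is injective, $\alpha_i(x,y)=\alpha_i(y,y)$, and the $\theta$-identity gives $x=y$. For surjectivity, given $x'\in A'$, pick $x$ with $f(x)=f'(x')$. Then $f'(\alpha_i(x',b(x)))=\alpha_i(f'x',f'x')\le 0$, so $\alpha_i(x',b(x))\in K'$. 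Put $z_i=a^{-1}(\alpha_i(x',b(x)))$; then $b(\theta(z_1,\dots,z_n,x))=x'$.

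Finally I must check that $b$ reflects the order, which is what makes the bijection $b$ an isomorphism. Suppose $b(x)\le b(y)$. By monotonicity, $\alpha_i(x,y)\le\alpha_i(y,y)$ in $A$ would follow from $\alpha_i(b x,b y)\le\alpha_i(by,by)$ via $a^{-1}$, since $a$ is an order-isomorphism, provided both elements lie in $K$. The element $\alpha_i(y,y)$ is in $K$ as above. The element $\alpha_i(x,y)$ satisfies $f(\alpha_i(x,y))=\alpha_i(fx,fy)$ with $fx\le fy$, so $f(\alpha_i(x,y))\le\alpha_i(fy,fy)\le 0$. Monotonicity of $\theta$ then yields $x\le y$. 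I expect this order-reflection step to be the main obstacle, because it is where the direction of the inequality ($\le 0$ rather than $0\le$) must match the comma object's direction. It is also the step that was only implicit in the lax proof, so I would check it carefully there as well.
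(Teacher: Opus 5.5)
Your proposal is correct and follows the paper's route: you mirror the proof of Theorem~\ref{lax proto ord-varieties} with $K=\comma{f}{i_B}=\{a\in A: f(a)\le 0\}$, and you use Corollary~\ref{colax SS5L} in place of Theorem~\ref{lax SS5L}. Your additional check that $b$ reflects the order is a genuine completion of a step the paper leaves implicit: both $\alpha_i(x,y)$ and $\alpha_i(y,y)$ lie in $K$, $a$ is an order-isomorphism, and $\theta$ is monotone, so $b(x)\le b(y)$ gives $x\le y$. This matters because the paper only proves $b$ bijective, and a bijective homomorphism of ordered algebras need not be an isomorphism.
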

\begin{proof}
The proof is similar to that of Theorem~\ref{lax proto ord-varieties}. The main difference is that now we use the characterisation of colax protomodularity from Corollary~\ref{colax SS5L}. The fact that, for each $i$, the existing $\alpha_i$ belongs to $K=\comma{f}{i_Y}$ implies that each $\alpha_i(y,y)=\alpha_i(f(x),f(y))=f(\alpha_i(x,y))\le 0$.
\end{proof}\

\begin{example}\label{ex of (co)lax proto var}
Varieties of ordered algebras provide examples showing that the answer to the following question, posed in~\cite{CMR}, has a positive answer:
\begin{center} \textsf{Is there any lax protomodular or colax protomodular non-degenerate $\Ord$-category?} \end{center}
(where by degenerate $\Ord$-enrichment we mean one whose hom-sets preorders are symmetric).
\begin{itemize}
	\item[1.] Let $\V$ be the pointed variety of ordered algebras with two binary operations $\alpha$ and $\theta$ as in Theorem~\ref{lax proto ord-varieties}(ii) when $n=1$, so that it is a lax protomodular category.  Any ordered set $A$ with a bottom element $\bot$ gives an example of an algebra of $\V$, with $\bot=0$, and $\alpha_A(x,y)=\theta_A(x,y)=x$. In particular, $\N$ with its usual order is an example of such an algebra.
	\item[2.] Similarly, let $\V$ be a pointed variety of ordered algebras which is a colax protomodular category for which $n=1$ and there exist binary operations $\alpha$ and $\theta$ as in Corollary~\ref{colax proto ord-varieties}(ii). Any ordered set $A$ with a top element $\top$ gives an example of an algebra of $\V$, with $\top=0$, and $\alpha_A(x,y)=\theta_A(x,y)=x$. In particular, the set of non-positive integers with its usual order is an example of such an algebra.
\end{itemize}
\end{example}

\section{The Mal'tsev property of varieties of ordered algebras}\label{Section: Mal'tsev}
We recall from~\cite{CR} the main topics concerning the notion of $\Ord$-Mal'tsev category.

Let $\C$ denote an $\Ord$-category.
A morphism $m\colon U\to V$ is said to be \defn{full} when: for any morphisms $a,a'\colon A\to U$ such that $ma \moi ma'$, $a\moi a'$; equivalently, $ma\moi ma'$ if and only if $a\moi a'$. In the $\Ord$-enriched context, all morphisms are faithful. The fully faithful (mono)morphisms are called \defn{ff-(mono)morphisms} (ff'' stands for ``fully faithful''); see~\cite{KV, Vassilis}. We denote ff-monomorphisms with arrows of the type $\mono$. A morphism $e\colon A\to B$ is called \defn{surjective on objects}, or \defn{so-morphism}, when $e$ is left orthogonal to every ff-monomorphism $m$: the usual diagonal fill-in property holds
\[
	\xymatrix{A \ar[r]^-e \ar[d]_-u & B \ar[d]^-v \ar@{.>}[dl]|-d \\ U \ar@{ >->}[r]_-m & V.}
\]
When $\C$ has binary 2-products, then every so-morphism is necessarily an epimorphism.

A \defn{relation} from an object $A$ to an object $B$ of $\C$ is a span $A\stackrel{r_1}{\longleftarrow} R \stackrel{r_2}{\longrightarrow} B$ such that $(r_1,r_2)$ is jointly ff-monomorphic. The \emph{opposite} relation $R^\circ$ is the span $B\stackrel{r_2}{\longleftarrow} R \stackrel{r_1}{\longrightarrow} A$. When $\C$ admits binary 2-products, a relation is given by an ff-monomorphism $\la r_1,r_2\ra \colon R\mono A\times B$. When $A=B$, we say that $R$ is a relation on $A$.

Given morphisms $x\colon W\to A$ and $y\colon W\to B$ of $\C$, we use the notation $(x,y)\in_W R$, or $xRy$, when there exists a (unique) factorisation $\la x,y\ra^R$ making the following diagram commutative
\[
	\xymatrix@R=10pt{ & A \\
		W \ar[ur]^-{x} \ar[dr]_-{y} \ar@{.>}[rr]^{\langle x,y\rangle^R} & & R. \ar[ul]_-{r_1} \ar[dl]^-{r_2}\\
		& B}
\]
A relation $A\stackrel{r_1}{\longleftarrow} R \stackrel{r_2}{\longrightarrow} B$ in $\C$ is called an \defn{ideal} when, given generalised elements $x,x'\colon W\to A$, $y,y'\colon W\to B$, we have
\begin{equation}\label{w-c def}
	\left( x'\le x \;\wedge\; (x,y)\in_W R \;\wedge\; y\le y' \right) \Rightarrow (x',y')\in_W R.
\end{equation}
\noindent Note that an ideal $A\stackrel{r_1}{\longleftarrow} R \stackrel{r_2}{\longrightarrow} B$ is a relation, by definition, thus
$(r_1,r_2)$ is jointly ff-monomorphic. We use the notation $R\colon A\looparrowright B$ for ideals. An ideal $R\colon A\looparrowright A$ which is reflexive and transitive is called a \defn{congruence}.

Any comma object as in diagram~\eqref{comma f/g} gives an example of a relation $A\stackrel{\pi_1}{\longleftarrow} \comma{f}{g} \stackrel{\pi_2}{\longrightarrow} C$ in $\C$; this relation is also an ideal, and any comma object of the type $\comma{f}{f}$ is a congruence.

\begin{definition}\label{Ord-Mal'tsev} An $\Ord$-category $\C$ is called an \defn{$\Ord$-Mal'tsev category} when every ideal $D\colon A\looparrowright B$ satisfies the property: given morphisms $x, u, u'\colon W\to A$, $y,y',v\colon W\to B$ such that $(x,y)\in_W D$, $y\le y'$, $(u,y')\in_W D$, $u\le u'$ and $(u',v)\in_W D$, then $(x,v)\in_W D$. This property may be pictured as
\begin{equation}\label{picture Ord-difunctional}
	\begin{array}{ccc}
	 x & D & y \vspace{-10pt}\\
	   &   & \rotatebox{-90}{$\le$} \\
	 u & D & y' \vspace{-10pt}\\
	\rotatebox{-90}{$\le$} \\
	 u' & D & v \\
	\hline
	x & D & v.
	\end{array}
\end{equation}
\end{definition}

There are several different approaches to regularity for enriched categories (see for instance~\cite{BourkeGarner}). We chose to use an approach which allows for an easy calculus of (order) ideals. An $\Ord$-category $\C$ is called \defn{regular} when $\C$ has finite (weighted) limits, $\C$ admits an (so-morphism,ff-monomorphism) factorisation system, so-morphisms are stable under $2$-pullbacks in $\C$ and every so-morphism is a bicoinserter (of its comma object). When $\C$ is a regular $\Ord$-category, we may compose relations (similarly as in the 1-dimensional regular categorical context). Any variety of ordered algebras is a regular $\Ord$-category.

\begin{theorem}{\emph{(\cite{CR})}}\label{Ord-enriched for CKP} Let $\C$ be a regular $\Ord$-category. Then the following statements are equivalent:
\begin{itemize}
	\item[(i)] $\C$ is a (regular) $\Ord$-Mal'tsev category.
	\item[(ii)] $RS\cong SR$, for any congruences $R,S\colon A\looparrowright A$ on an object $A$.
\end{itemize}
\end{theorem}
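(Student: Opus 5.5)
We prove the theorem by mimicking the classical argument of Carboni--Kelly--Pedicchio, working throughout with generalised elements and using the calculus of ideals available in a regular $\Ord$-category. Recall that for ideals $R\colon A\looparrowright B$ and $S\colon B\looparrowright C$ the composite $SR$ is the image (so-morphism, ff-monomorphism factorisation) of the comma-type span obtained by pulling back $r_2$ and $s_1$. Since so-morphisms are stable under $2$-pullbacks, $(x,z)\in_W SR$ holds exactly when, after some so-morphism $p\colon W'\to W$, there is $y\colon W'\to B$ with $(xp,y)\in_{W'}R$ and $(y,zp)\in_{W'}S$. Moreover, because $R$ and $S$ are ideals, the composite is again an ideal, and one can absorb inequalities: $(x,y)\in R$, $y\le y'$, $(y',z)\in S$ already gives $(x,z)\in SR$.

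\emph{(i)$\Rightarrow$(ii).} Let $R,S$ be congruences on $A$. By symmetry it suffices to prove $SR\le RS$. Consider $D=RS\colon A\looparrowright A$, which is an ideal. Suppose $(x,z)\in_W SR$. After passing along an so-morphism we may assume there is $y$ with $xRy$ and $ySz$. Reflexivity of $S$ gives $xSx$, so $(x,x)\in D$ via $x S x R y$ would give $(x,y)\in RS$. We then form the three rows of \eqref{picture Ord-difunctional}:
\begin{itemize}
\item $(x,x)\in D$, by reflexivity of both relations;
\item $(y,x)$: we need $y\,S\,?\,R\,x$;
\item $(y,z)\in D$, via $ySz$ and $zRz$.
\end{itemize}
The trouble is the middle row: $yRx$ is not available without symmetry. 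Instead we choose the rows in the order-theoretic form with equalities as the inequalities:
\begin{itemize}
\item $(x,y)\in D$, from $xSx$ and $xRy$;
\item $(y,y)\in D$;
\item $(y,z)\in D$, from $ySz$ and $zRz$.
\end{itemize}
This does not connect the data either. So the right move is to use the difunctional form on the span $A\stackrel{}{\leftarrow}P\to A$ built from $R$ and $S$ directly, as CKP do. Take $D:=$ the ideal $R^{\circ}$-twisted composite: set $D=S\,R$ viewed as a relation, and express $(x,z)$ through pairs that lie in $RS$. Concretely, we pick the relation $D=\{(a,b): a\,R\,\cdot\,S\,b\}$ in the reversed convention, so that $(x,y)$, $(y,y)$, $(y,z)$ lie in it, and \eqref{picture Ord-difunctional} gives $(x,z)\in D$. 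The point to check is that the convention making these three memberships hold is precisely the one yielding $(x,z)\in RS$; this bookkeeping of variances (ideals are closed under the order only on one side) is the main obstacle. If it fails, the alternative is to prove the equivalent statement ``every ideal $D$ satisfies $DD^{\circ}D\le D$ up to order'', i.e. show that (i) is equivalent to $D\,{\ge}\,D^{\circ}\,{\ge}\,D\le D$ with $\ge$ the order congruence, and then apply it to the ideal $RS$ using reflexivity to insert the order congruence.

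\emph{(ii)$\Rightarrow$(i).} Let $D\colon A\looparrowright B$ be an ideal with $d_1,d_2$, and take the congruences $R=\comma{d_1}{d_1}$ and $S=\comma{d_2}{d_2}$ on the object $D$; they are congruences since they are comma objects of the form $\comma{f}{f}$. Given the data of \eqref{picture Ord-difunctional}, we obtain generalised elements $\langle x,y\rangle^D$, $\langle u,y'\rangle^D$, $\langle u',v\rangle^D$ with $\langle x,y\rangle S\langle u,y'\rangle$ (since $y\le y'$), and $\langle u,y'\rangle R\langle u',v\rangle$ (since $u\le u'$). Hence $\langle x,y\rangle\,(RS)\,\langle u',v\rangle$. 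By (ii), $RS=SR$, so after an so-morphism there is $\langle a,b\rangle\in D$ with $x\le a$ and $b\le v$. The ideal property \eqref{w-c def} then gives $(x,v)\in D$; stability of so-morphisms under $2$-pullbacks together with $D$ being an ff-mono lets us descend this membership back along the so-morphism to $W$.
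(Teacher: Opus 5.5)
The paper gives no proof of this theorem; it only quotes it from \cite{CR}, so I am judging your proposal on its own terms.

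Your proof of (ii)$\Rightarrow$(i) is sound. It is the enriched form of the Carboni--Kelly--Pedicchio kernel-pair argument. The congruences $\comma{d_1}{d_1}$ and $\comma{d_2}{d_2}$ on $D$ replace the kernel pairs of the projections. The ideal property of $D$ absorbs the inequalities $xp\le a$ and $b\le vp$. Orthogonality of the so-morphism against the ff-monomorphism $D\mono A\times B$ brings the conclusion back to stage $W$.

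The gap is in (i)$\Rightarrow$(ii), which as written is not proved. You end with ``the point to check \dots\ is the main obstacle. If it fails, the alternative is \dots''. The relation $D=\{(a,b): a\,R\cdot S\,b\}$ ``in the reversed convention'' is ambiguous: read literally, $(x,z)\in D$ is just the hypothesis $(x,z)\in SR$ and proves nothing. The fallback via $DD^{\circ}D$ is only gestured at.

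The missing step is the second attempt you discarded as ``not connecting the data''; in fact it does. Since $D$ is an ideal, the inequalities in the definition of an $\Ord$-Mal'tsev category are absorbed: $xDy$ and $y\le y'$ give $xDy'$, and $u\le u'$ and $u'Dv$ give $uDv$. So the property says exactly that every ideal is difunctional, and you may apply it with all inequalities being identities.

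Here is the argument. Take $D=RS$, an ideal, so that $a\,D\,c$ means $a\,S\cdot R\,c$. Let $p\colon W'\to W$ be the so-morphism with $xp\,R\,y\,S\,zp$. Reflexivity gives:
\begin{itemize}
\item $(xp,y)\in_{W'}D$, via $xp\,S\,xp\,R\,y$;
\item $(y,y)\in_{W'}D$;
\item $(y,zp)\in_{W'}D$, via $y\,S\,zp\,R\,zp$.
\end{itemize}
These are exactly the $\Ord$-Mal'tsev hypotheses at stage $W'$, with $y'=y$, $u=u'=y$ and $v=zp$. Hence $(xp,zp)\in_{W'}RS$. The diagonal fill-in of $p$ against $RS\mono A\times A$ then yields $(x,z)\in_W RS$. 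Thus $SR\le RS$, and exchanging $R$ and $S$ gives the reverse inclusion. Only reflexivity is used; no symmetry and no variance bookkeeping are needed.
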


\begin{theorem}\label{Ord-Maltsev ord-varieties} Let $\V(\cong \Alg{\T})$ be a variety of ordered algebras. The following statements are equivalent:
\begin{itemize}
	\item[(i)] $\V$ is an $\Ord$-Mal'tsev category.
	\item[(ii)] There exists a ternary operation $\rho$ in the theory $\T$ of $\V$ such that:
	\[
	\left\{\begin{array}{l}
		a\le \rho(a,b,c), \;\text{when}\;\; b\le c, \\
		\rho(u,v,w)\le w, \;\text{when}\;\; u\le v.
		\end{array}\right.
	\]
\end{itemize}
\end{theorem}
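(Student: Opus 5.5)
The plan is to prove both implications directly from Definition~\ref{Ord-Mal'tsev}, working with elements ($W=1$ suffices in $\V$, and for general $W$ one argues pointwise). This mimics the classical proof that difunctionality of relations in the free algebra yields a Mal'tsev term. The one new ingredient is that the free algebra is taken on an \emph{ordered} set of generators, so that the hypotheses $y\le y'$ and $u\le u'$ in~\eqref{picture Ord-difunctional} are built into the generators.

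(ii) $\To$ (i). Let $D\colon A\looparrowright B$ be an ideal, and suppose $(x,y)\in D$, $y\le y'$, $(u,y')\in D$, $u\le u'$ and $(u',v)\in D$. Since $D$ is a subalgebra of $A\times B$, applying $\rho$ componentwise to the three pairs $(x,y),(u,y'),(u',v)$ gives
\[(\rho(x,u,u'),\rho(y,y',v))\in D.\]
Because $u\le u'$, the first condition of (ii) gives $x\le\rho(x,u,u')$. Because $y\le y'$, the second condition gives $\rho(y,y',v)\le v$. The ideal property~\eqref{w-c def} then yields $(x,v)\in D$. For generalised elements $W\to A$, $W\to B$ the same computation works pointwise, because the homomorphisms involved commute with $\rho$.

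(i) $\To$ (ii). Let $X=\{x,y,y',u,u',v\}$ be ordered only by $y\le y'$ and $u\le u'$, and let $F=F(X)$ be the free algebra on it. The elements of $X$ are then elements of $F$ with $y\le y'$ and $u\le u'$ in $F$. Let $S\subseteq F\times F$ be the subalgebra generated by $(x,y),(u,y'),(u',v)$. It consists of the pairs $(\sigma(x,u,u'),\sigma(y,y',v))$ for ternary (discrete-arity) operations $\sigma$, since $S$ is the image of the free algebra on three generators. Put
\[D=\{(s,t)\in F\times F \,:\, s\le s_0,\ t_0\le t \text{ for some }(s_0,t_0)\in S\},\]
with the order induced from $F\times F$. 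We must check that $D$ is an ideal. It is a relation, since it carries the induced order and so $\la r_1,r_2\ra$ is an ff-monomorphism. It is a subalgebra because every operation of arity $Z$ is monotone on $F^Z$ and $S$ is a subalgebra. It satisfies~\eqref{w-c def} by construction. The only point needing care is closure under operations whose arity $Z$ is a non-discrete ordered set: a monotone $Z$-family of pairs in $D$ need not come from a monotone $Z$-family in $S$. I expect this to be the main obstacle. I would first try to avoid it by noting that each such operation restricted to monotone families can be handled via the monotone maps $s\le s_0$, $t_0\le t$. If that fails, I would replace $D$ by the ideal generated by $S$ in the regular $\Ord$-category $\V$ (using the (so, ff-mono) factorisation) and describe its elements explicitly.

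Applying the $\Ord$-Mal'tsev property to $D$ gives $(x,v)\in D$. Hence there is a ternary $\sigma$ with $x\le\sigma(x,u,u')$ and $\sigma(y,y',v)\le v$ in $F(X)$. By freeness, these inequalities hold in every algebra $A$ under every monotone assignment $X\to A$, i.e.\ whenever $y\le y'$ and $u\le u'$. The two inequalities involve disjoint sets of variables. So, taking $\rho=\sigma$ and renaming variables, we get $a\le\rho(a,b,c)$ whenever $b\le c$, and $\rho(u,v,w)\le w$ whenever $u\le v$, which is (ii).
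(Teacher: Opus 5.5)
Your (ii)$\To$(i) is correct, and more direct than the paper's. The paper proves $SR\subseteq RS$ for congruences and then appeals to Theorem~\ref{Ord-enriched for CKP}, whereas you check the defining property for an arbitrary ideal by applying $\rho$ to the three pairs.

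The gap is in (i)$\To$(ii), at the step you flagged yourself, and it is a real failure, not an unchecked detail. Your $D$ is the down--up closure of the set-theoretic image $S$ of $F(3)$. It need not be closed under operations of non-discrete arity, so it need not be an ideal, and $S$ need not be the generated subalgebra. Concretely, take a theory with:
\begin{itemize}
\item ternary operations $\sigma_1,\sigma_2$;
\item an operation $\theta$ of arity $\{z_1\le z_2\}$;
\item the inequations $\sigma_1(a,b,c)\le\sigma_2(a,b,c)$ whenever $b\le c$, and also whenever $a\le b$.
\end{itemize}
In $F(X)$ the pairs $(\sigma_1(x,u,u'),\sigma_1(y,y',v))\le(\sigma_2(x,u,u'),\sigma_2(y,y',v))$ both lie in $S$ and form a monotone family. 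Applying $\theta$ to them gives a pair outside $D$. The order of the free algebra is generated only by the order of $X$, monotonicity and these two inequations, so a witness would have to be a term $\theta(\sigma',\sigma'')$ with $\sigma'=\sigma_1$ and $\sigma''=\sigma_2$. But $\theta(\sigma_1,\sigma_2)$ is not a ternary operation, because $\sigma_1\not\le\sigma_2$ in $F(3)$.

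So witnesses of a monotone family need not be monotone, and your first suggested fix fails. Your fallback, the ideal generated by $S$, contains pairs like this one that are witnessed by no ternary operation. Hence knowing that $(x,v)$ lies in it no longer lets you read off $\rho$. If all basic operations had discrete arity, your $D$ would be an ideal by monotonicity and the classical argument would go through; the ordered arities are exactly what break it.

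The missing idea is to work only with relations that are ideals for free, namely comma objects, and to get the witness from composition rather than from generation. The paper sets up:
\begin{itemize}
\item the free algebra $\T(T^3,T)$ on three discrete generators;
\item the arity $Z=\{z_1,z_2,z_3,z_4\}$ with $z_2\le z_3$;
\item the maps $f(x_i)=z_i$ and $g(x_i)=z_{i+1}$;
\item the congruences $R=\comma{F(f)}{F(f)}$ and $S=\comma{F(g)}{F(g)}$ (the paper's $S$, not yours).
\end{itemize}
These are congruences automatically, since comma objects are subalgebras (Section~\ref{Section: proto}) and ideals. One has $\pi_1\,S\,\pi_2\,R\,\pi_3$. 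Permutability (Theorem~\ref{Ord-enriched for CKP}) then yields $\rho$ with $\pi_1\,R\,\rho\,S\,\pi_3$, i.e.\ $\overline{\pi}_1\le\rho\tau(f)$ and $\rho\tau(g)\le\overline{\pi}_4$. Evaluating these at the monotone maps $(a,b,c,c)$ and $(u,u,v,w)$ on $Z$ gives exactly the two inequalities of (ii).
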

\begin{proof} (i) $\To$ (ii) Consider the arities $(X,=)$ = $(\{x_1,x_2,x_3\}, =)$ and $(Z,\le)$ = $(\{z_1,z_2,z_3,z_4\},z_2\le z_3)$. We denote by $\pi_i$ the projections $\pi_{x_i}$ of $X\pitchfork T=T^3$ onto $T$ ($i=1,2,3$), and by $\overline{\pi}_i$ the operations $\pi_{z_i}\in\T(Z\pitchfork T,T)$ ($i=1,2,3,4$). The order on $Z$ induces an order on the $\overline{{\pi}_i}'s$: $\overline{\pi}_2\leq \overline{\pi}_3$. Consider now the (monotone) maps $f,g\colon (X,=)\to (Z,\le)$ defined by $f(x_i)=z_i$, $g(x_i)=z_{i+1}$, $i\in\{1,2,3\}$. Applying the free $\T$-algebra functor $F\colon \Ord\to\Alg{\T}$,
we have $F(X)=\T(T^3,-)$, $F(Z)=\T(Z\pitchfork T,-)$ and $F(f)$ is a natural transformation whose $T$-component is
\[
	\begin{array}{rcll}
		F(f)_T: \T(T^3,T) & \longrightarrow & \T(Z\pitchfork T,T). \\
		\xymatrix{T^3 \ar[r]^-{\alpha} & T} & \longmapsto & \xymatrix@C=70pt{Z\pitchfork T \ar[r]^-{\tau(f)=\la \overline{\pi}_1,\overline{\pi}_2,\overline{\pi_3}\ra} & T^3 \ar[r]^-\alpha & T}
	\end{array}
\]
In particular, $F(f)_T(\pi_i)=\overline{\pi}_i$; analogously, $F(g)_T(\alpha)=\alpha\tau(g)$, with $\tau(g)=\la \overline{\pi}_2,\overline{\pi}_3,\overline{\pi}_4\ra$, thus $F(g)_T(\pi_i)=\overline{\pi}_{i+1}$, for each $i$.

Consider the congruences on $\T(T^3,T)$ given by the comma objects $R=\comma{F(f)}{F(f)}$ and $S=\comma{F(g)}{F(g)}$. Then
\[
	\begin{array}{l}
	  (\pi_1,\pi_2)\in S, \text{ since}\;\, F(g)(\pi_1)=\overline{\pi}_2 \le \overline{\pi}_3=F(g)(\pi_2)\;\text{, and}\\
	  (\pi_2,\pi_3)\in R, \text{ since}\;\, F(f)(\pi_2)=\overline{\pi}_2 \le \overline{\pi}_3=F(f)(\pi_3),
	\end{array}
\]
i.e. $\pi_1 RS \pi_3$. Since $\V$ is a regular $\Ord$-Mal'tsev category, we have $RS\cong SR$, by Theorem~\ref{Ord-enriched for CKP}; thus, $\pi_1 SR \pi_3$. Consequently, there exists a ternary operation $\rho\in \T(T^3,T)$ such that
\begin{equation}\label{eq:p}
	\begin{array}{lcl}
		\pi_1 R \rho  & : & F(f)(\pi_1)\le F(f)(\rho) \Leftrightarrow \overline{\pi}_1\leq \rho\tau(f) \vspace{5pt}\\
		& & \xymatrix@R=5pt@C=40pt{Z\pitchfork T \ar[dr]_(.4){\tau(f)=\la \overline{\pi}_1,\overline{\pi}_2,\overline{\pi}_3\ra\;\;\;\;\;\;} \ar[rr]^-{\overline{\pi}_1}_-{\rotatebox{-90}{$\le$}} & & T \\ & T^3 \ar[ur]_-\rho} \vspace{5pt} \\
		\rho S \pi_3  & : & F(g)(\rho)\le F(g)(\pi_3) \Leftrightarrow \rho\tau(g)\leq \overline{\pi}_4 \vspace{5pt}\\
		& & \xymatrix@R=5pt@C=40pt{ & T^3 \ar[dr]^-p \\ Z\pitchfork T \ar[ur]^(0.4){\tau(g)=\la \overline{\pi}_2,\overline{\pi}_3,\overline{\pi}_4\ra\;\;\;\;\;\;} \ar[rr]_-{\overline{\pi}_4}^-{\rotatebox{-90}{$\le$}} & & T.} \\
	\end{array}
\end{equation}

For any $\T$-algebra $A\colon \T \to \Ord$, the image under $A$ of the first diagram of \eqref{eq:p} is
\[\xymatrix@R=5pt{A^Z \ar[rr]^-{A(\overline{\pi}_1)}_-{\rotatebox{-90}{$\le$}} \ar[dr]_(.4){A(\tau(f)=A(\la \overline{\pi}_1,\overline{\pi}_2,\overline{\pi}_3\ra)\;\;\;\;\;\;\;\;} & &  A.\\ & A^3 \ar[ur]_(.6){\rho_A} }\]

Consider elements $a,b,c\in A$ with $b\le c$ and the monotone map $\xx\colon Z\to A$ defined by $\xx(z_1)=a$, $\xx(z_2)=b$ and $\xx(z_3)=\xx(z_4)=c$. Then $A(\overline{\pi}_1)(\xx)=\xx(z_1)=a$ and $A(\tau(f))(\xx)=(\xx(z_1),\xx(z_2),\xx(z_3))=(a,b,c)$. So, the order between the morphisms of the triangle above translates to
\begin{equation}\label{M1}
	a\le \rho_A(a,b,c),\; \text{when}\;\; b\le c.
\end{equation}

Similarly, applying the functor $A$ to the second diagram of \eqref{eq:p} we obtain
\[\xymatrix@R=5pt{ & A^3 \ar[dr]^(.6){\rho_A} \\
		A^Z \ar[rr]_-{A(\overline{\pi}_4)}^-{\rotatebox{-90}{$\le$}} \ar[ur]^(.4){A(\tau(g))=A(\la \overline{\pi}_2,\overline{\pi}_3,\overline{\pi}_4\ra)\;\;\;\;\;\;\;\;} & & A. }\]
Consider elements $u,v,w\in A$ with $u\le v$ and the monotone map $\yy\colon Z\to A$ defined by $\yy(z_1)=\yy(z_2)=u$, $\yy(z_3)=v$ and $\yy(z_4)=w$. Then $A(\overline{\pi}_4)(\yy)=\yy(z_4)=w$ and $A(\tau(g))(\yy)=(\yy(z_2), \yy(z_3), \yy(z_4))=(u,v,w)$. So, the order between the morphisms of the triangle above translates to
\begin{equation}\label{M2}
	\rho_A(u,v,w)\le w,\; \text{when}\;\; u\le v.
\end{equation}

(ii) $\To$ (i) Suppose that $R$ and $S$ are congruences on some $\T$-algebra $A$ such that $a R b S c$. We have
\[
	\begin{array}{ccccc}
		a & S & a & R & b \\
		b & S & b & R & b \\
		b & S & c & R & c.
	\end{array}
\]
Applying $\rho_A$ to the vertical triples, we get $\rho_A(a,b,b)\; S\; \rho_A(a,b,c)\; R\; \rho_A(b,b,c)$. By assumption, $a\le \rho_A(a,b,b)$ and $\rho_A(b,b,c)\le c$. Using the fact that $R$ and $S$ are ideals, we conclude that $a\; S\; \rho_A(a,b,c)\; R\; c$, proving that $SR\subseteq RS$. The proof that $RS\subseteq SR$ is similar.
\end{proof}

\begin{corollary}\label{Ord-Maltsev vars are degenerate} Let $\V\cong \Alg{\T}$ be a variety of ordered algebras which is an $\Ord$-Mal'tsev category. Then any $\T$-algebra has degenerate order.
\end{corollary}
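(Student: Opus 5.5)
We use the ternary operation $\rho$ supplied by Theorem~\ref{Ord-Maltsev ord-varieties}. Since $\V$ is an $\Ord$-Mal'tsev category, implication (i)~$\To$~(ii) of that theorem gives $\rho$ in $\T$ satisfying, in every $\T$-algebra $A$,
\[
a\le \rho(a,b,c) \text{ whenever } b\le c, \qquad \rho(u,v,w)\le w \text{ whenever } u\le v.
\]
To show that the order of $A$ is symmetric, fix $x,y\in A$ with $x\le y$. The aim is to derive $y\le x$ by applying $\rho$ to a triple in which both hypotheses can be used at once.

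We try the triple $(y,x,x)$. First, because $x\le x$ by reflexivity, the first inequality with $a=y$ and $b=c=x$ yields $y\le \rho(y,x,x)$. Second, we want to use $\rho(u,v,w)\le w$ with $(u,v,w)=(y,x,x)$, but that requires $y\le x$, which is the very thing to be proved. So this choice does not work as it stands.

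Instead we exploit the monotonicity of $\rho_A$. Recall that $\rho_A\colon A^3\to A$ is monotone for the pointwise order, because $A$ is an $\Ord$-functor and $\rho$ has discrete arity. The triples involved are ordered by $(x,x,x)\le(y,x,x)$, since $x\le y$ in the first coordinate. We then chain as follows:
\[
y\le \rho(y,x,x) \qquad\text{(first condition, using $x\le x$)},
\]
\[
\rho(y,x,x)\le \rho(y,y,x) \qquad\text{(monotonicity, since $x\le y$ in the second coordinate)},
\]
\[
\rho(y,y,x)\le x \qquad\text{(second condition with $u=v=y$ and $w=x$, using $y\le y$)}.
\]
By transitivity, $y\le x$. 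Hence every $\T$-algebra has a symmetric preorder, which is what degenerate order means here.

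The one delicate point is choosing the chain so that each hypothesis is only invoked at a reflexive pair. The extra inequality needed, moving the middle argument from $x$ up to $y$, is then supplied by monotonicity of $\rho_A$ together with the given $x\le y$. Everything else follows directly from Theorem~\ref{Ord-Maltsev ord-varieties}.
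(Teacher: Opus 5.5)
Your proof is correct and is exactly the paper's argument: the chain $y\le\rho_A(y,x,x)\le\rho_A(y,y,x)\le x$, which uses the first inequality at the reflexive pair $x\le x$, monotonicity of $\rho_A$ in the middle coordinate, and the second inequality at $y\le y$. The aside about $(x,x,x)\le(y,x,x)$ is never used and can be dropped.
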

\begin{proof}
	Consider a $\T$-algebra $A$ and suppose that $x\le y$ in $A$. Using the inequalities of Theorem~\ref{Ord-Maltsev ord-varieties}(ii) and the fact that $\rho_A$ is monotone, we get
\[
	y\le \rho_A(y,x,x)\leq \rho_A(y,y,x)\le x.
\]
\end{proof}

It is well-known that any protomodular category is a Mal'tsev category~\cite{MCFPO}. However, this implication is false in our ordered enriched context. Moreover, a variety of ordered algebras which is a $\Pos$-Mal'tsev category (i.e. a $\Pos$-enriched category satisfying Definition~\ref{Ord-Mal'tsev}) is necessarily a Mal'tsev category, since the order of the algebras is discrete and condition (ii) of Theorem~\ref{Ord-Maltsev ord-varieties} coincides with \eqref{proto ops}.

\begin{proposition}\label{Ord-proto does not imply Ord-Maltsev} A variety of ordered algebras which is a (co)lax protomodular category is not necessarily an $\Ord$-Mal'tsev category.
\end{proposition}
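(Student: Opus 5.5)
The proof will be a counterexample built from the varieties in Example~\ref{ex of (co)lax proto var}, combined with Corollary~\ref{Ord-Maltsev vars are degenerate}. Corollary~\ref{Ord-Maltsev vars are degenerate} says that every algebra of an $\Ord$-Mal'tsev variety of ordered algebras has symmetric order. So it is enough to find a lax (respectively colax) protomodular variety having at least one algebra whose order is not symmetric.

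For the lax case I take the pointed variety $\V$ of Example~\ref{ex of (co)lax proto var}.1. Its theory $\T$ has one constant $0$ and two binary operations $\alpha,\theta$, subject to
\[
0\le\alpha(x,x), \qquad x=\theta(\alpha(x,y),y).
\]
By Theorem~\ref{lax proto ord-varieties}, with $n=1$, $\V$ is lax protomodular. I would check explicitly that $(\N,\le)$, with $0$ interpreted as $0$ and $\alpha(x,y)=\theta(x,y)=x$, is a $\T$-algebra:
\begin{itemize}
\item the operations are monotone, being first projections;
\item $0\le\alpha(x,x)=x$ holds because $0$ is the bottom element;
\item $\theta(\alpha(x,y),y)=x$ holds trivially.
\end{itemize}
Since $0\le 1$ but $1\not\le 0$ in $\N$, this algebra has non-degenerate order. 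By Corollary~\ref{Ord-Maltsev vars are degenerate}, $\V$ cannot be an $\Ord$-Mal'tsev category.

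The colax case is dual. I use the variety of Example~\ref{ex of (co)lax proto var}.2 together with Corollary~\ref{colax proto ord-varieties}. The witness algebra is the set of non-positive integers with $0$ as top element and the same projection operations; it contains $-1\le 0$ with $0\not\le -1$.

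The only point needing care is that these theories are genuine $\Ord$-algebraic theories, and that the free/presented theory really forces nothing beyond the stated inequalities. In particular, one must make sure no symmetry of the order is imposed. That this holds is exactly what the existence of the non-degenerate model shows: any inequality derivable in $\T$ holds in every algebra, and $\N$ is an algebra. So the argument reduces to verifying the model, which is routine.
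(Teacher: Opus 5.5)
Your proposal is correct and matches the paper's proof, which simply cites Example~\ref{ex of (co)lax proto var} together with Corollary~\ref{Ord-Maltsev vars are degenerate}; you have just written out the routine check that $\N$ (respectively the non-positive integers) with projection operations is a model whose order is not symmetric. One small point, equally implicit in the paper's phrase ``the pointed variety'': to make the theory genuinely pointed, so that Theorem~\ref{lax proto ord-varieties} applies, you should also impose equations such as $\alpha(0,0)=0=\theta(0,0)$, and your projection models satisfy these, so the argument is unaffected.
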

\begin{proof} This follows immediately from Example~\ref{ex of (co)lax proto var} and Corollary~\ref{Ord-Maltsev vars are degenerate}.
\end{proof}

\section{\Ord-Mal'tsev non-coherent ordered varieties}\label{Section: non-coherent}
As it is well-known, the category of ordered groups and monotone homomorphisms is not a variety of ordered algebras in the sense we presented in Section 1. Indeed, while the addition (not necessarily commutative) is monotone, the inversion operation is anti-monotone. In this section we will use a more general notion of variety of ordered algebras which includes the category of ordered groups and ordered vector spaces, for instance.

The definition of variety we presented in Section 1 uses algebraic theories as investigated by Power and his co-authors (see \cite{Power, NPower, PPower}; see also \cite{LR}), where all the operations of the algebras are morphisms in $\Ord$, that is are monotone. Following \cite{Adamek-Milius}, we say that these operations are \emph{coherent}, and that the variety is \emph{coherent}. A \emph{(not necessarily coherent) variety} is defined as before, but with no condition on the monotonicity of the operations. We point out that, as shown in \cite{Adamek-Milius} for the case of $\Pos$, while coherent varieties are the categories of algebras for finitary \emph{$\Pos$-enriched} monads on $\Pos$, (not necessarily coherent) varieties are the categories of algebras for finitary (not necessarily $\Pos$-enriched) monads on $\Pos$.

The aim of this Section is to show that for non-coherent varieties of ordered algebras the Mal'tsev condition does not trivialise, i.e. there exist algebras with non-degenerate order.

\begin{example}
Consider the variety $\V\cong \Alg{\T}$ of ordered algebras where the theory $\T$ has a 0-ary operation $0$, a monotone associative binary operation $+\colon T^2\to T$ with neutral element 0, and a (not necessarily monotone) operation $\ominus\colon X\pitchfork T \to T$, with arity $(X,\le)=(\{x,y\}, x\ge y)$, satisfying the following conditions:
\begin{enumerate}
	\item $0\leq a$;
	\item $a\ominus 0=a\;\;\&\;\;a\ominus a=0$.
\end{enumerate}
A natural example of such a $\T$-algebra is the ordered monoid $\N$ of natural numbers. Indeed, before we prove that this category is $\Ord$-Mal'tsev, let us check that it contains, as a subcategory (up to isomorphism), the category $\Ab\Mon_c$ of abelian monoids with cancellation. Given an abelian monoid with cancellation $(A,+,0)$, for $a,b\in A$ we define $a\le b$ if there exists $b_a\in A$ (necessarily unique) such that $a+b_a=b$. It is clear that $0\leq a$ for every $a\in A$, and it is easy to check that $\leq$ is reflexive and transitive (but not necessarily anti-symmetric), and that this order makes $+$ monotone (using commutativity of $+$). We define a new operation $\ominus\colon A^X\to A$ assigning to $(a,b)$, with $a\ge b$, $a\ominus b=b_a$, that is $a\ominus b$ is the unique element of $A$ such that $a+(b\ominus a)=b$. Then $a\ominus 0=a$ and $a\ominus a=0$ as required. We remark that this operation is in general not monotone.

This correspondence is functorial: if $f\colon A\to B$ is a morphism in $\Ab\Mon_c$, that is, $f$ is a homomorphism of monoids, then $f$ preserves all the operations, $0$, $+$, and $\ominus$, so it is a morphism in $\V$. Moreover, this functor is full and faithful, and injective on objects.

Now to prove that $\V$ is $\Ord$-Mal'tsev we follow the strategy used in Example 6.2 of \cite{CR}:
Let $\la d_1,d_2 \ra\colon D\to A\times B$ be an ideal in $\V$, and let $f_1,f_2,f_3\colon W\to A$ and $g_1,g_2,g_3\colon W\to B$ be such that $f_2\le f_3$, $g_1\le g_2$, and $f_i\,D\,g_i$ for $i=1,2,3$. We want to show that $f_1\,D\,g_3$. First note that, by (1), the zero homomorphism verifies $0\le h$ for every morphism $h$. In particular $0\le f_i$, hence $0\,D\,g_i$, because $D$ is an ideal, and $\langle 0,g_i\rangle^D\leq \langle f_i,g_i\rangle^D\colon W\to D$, for each $i$. Hence, for every $w\in W$,
\[\langle f_1,g_1\rangle^D(w)\ominus \la 0,g_1\rangle^D(w)=(f_1\ominus 0(w),g_1(w)\ominus g_1(w))\stackrel{(2)}{=}(f_1(w),0)\]
belongs to $D$; that is $f_1\,D\,0$. This, together with $0\,D\,g_3$ gives $f_1\,D\,g_3$ as claimed.

\end{example}

\begin{remark}
As pointed out in \cite[Example 6.4]{CR}, the pointwise enrichment of category $\Ord\Grp$ makes the order trivialise, and so $\Ord\Grp$ would be $\Ord$-Mal'tsev only if it were Mal'tsev, and it was shown in \cite[Theorem 4.6]{CM-FM} that this category is not a Mal'tsev category.

We note, however, that it has a (non-monotone) ternary operation $\rho\colon T^3\to T$, defined classically as $\rho(x,y,z)=xy^{-1}z$, verifying the conditions of Theorem \ref{Ord-Maltsev ord-varieties}. This shows that the characterisation obtained for \Ord-Mal'tsev coherent ordered varieties does not extend to the non-coherent case.
\end{remark}

\section*{Acknowledgments}
We are grateful to Dirk Hofmann for his insight on ordered algebras and interesting exchange of ideas with respect to this work.


\end{document}